\documentclass[12pt,letterpaper]{article}

\usepackage{amssymb,amsmath,amsthm,mathrsfs,array,graphicx}
\usepackage{float}
\usepackage{multirow}
\usepackage{natbib}
\usepackage{subfigure}
\usepackage{caption}
\usepackage{wrapfig}
\usepackage{color}
\usepackage{authblk}
\usepackage{fancyhdr}
\usepackage{lipsum}
\usepackage{amsmath}
\usepackage{mathtools}
\usepackage{amssymb,amsmath,amsthm,mathrsfs,array,graphicx}
\usepackage{float}
\usepackage{multirow}
\usepackage{natbib}
\usepackage{subfigure}
\usepackage{color}
\usepackage{authblk}
\usepackage{fancyhdr}
\usepackage{mathtools}
\usepackage{textcomp}
\usepackage{siunitx}
\usepackage{enumitem}
\usepackage{tikz} 
	\usetikzlibrary{shapes,arrows}
\usepackage{microtype} 
\usepackage{tcolorbox}
%makes spacing seem to work out better!
%%%%%%%%%%%%%%%%%%%
%\usepackage[framemethod=tikz]{mdframed}
%%%%%%%%%%%%%%%%%%%
%\usepackage[notref,notcite]{showkeys}	
\definecolor{mycolor}{rgb}{0.122, 0.435, 0.698}
\lhead{}
\chead{}
\usepackage[margin=1in]{geometry}
\usepackage{algorithmic}

\bibliographystyle{achemso}
%\bibliographystyle{ACM-Reference-Format} 
%\bibliography{bibfile}
%Equation number related to section
\numberwithin{equation}{section}

\newtheorem{remark}{Remark}
\newtheorem{definition}{Definition}

\newtheorem{assumption}{Assumption}

\newtheorem{theorem}{Theorem}
\newtheorem{proposition}{Proposition}
\newtheorem{hypo}{Hypothesis}

\usepackage[small]{titlesec}

\usepackage{hyperref}
    \hypersetup{
         unicode=true,
         colorlinks=true,
         linkcolor=blue,
         filecolor=magenta,      
         urlcolor=cyan,
         breaklinks=true
}

\usepackage{todonotes}
\setlength{\marginparwidth}{2cm}

\begin{document}

\title{Einstein's Brownian motion model for chemotactic system and traveling band}
\author{Rahnuma Islam}
\author{Akif Ibraguimov}
\affil{Department of Mathematics, Texas Tech University} 

\date{}

\maketitle

%%%%%%%%%%%%%%%%%%%%%%%%%%%%%%%%%%%%%%%%%%%%%%%%%%%%%%%%%%%%%%%%%%%%%%%%%%%%%%%%%%%%%%%%%%%%%%%%%%

\section{Abstract}
\label{abs}
We study the movement of the living organism in a band form towards the presence of chemical substrate based on a system of partial differential evolution equations. We incorporate the Einstein's method of Brownian motion to deduce the chemotactic model exhibiting travelling band. It is the first time that Einstein method has been used to motivate equations describing mutual interaction of chemotactic system. In addition to considering chemotactic response and the random motion of organism, we also consider the formation of crowd by organism via interactions within or between the community. This crowd effect can also be seen as any organism travel or migrate in a herd or group in search of food. We have shown that in the presence of limited and unlimited substrate traveling bands are achievable and it has been explained accordingly.

%%%%%%%%%%%%%%%%%%%%%%%%%%%%%%%%%%%%%%%%%%%%%%%%%%%%%%%%%%%%%%%%%%%%%%%%%%%%%%%%%%%%%%%%%%%%%%%%%%

\section{Introduction}
\label{introduction}
The celebrated work of Einstein's theory of Brownian motion \cite{Einstein05} offered the existence of discrete molecule that are too small to be seen through a microscope but the resulting motion should be visible through microscope. In this theory, he argued that agitated particles in a suspended water are the results due to the collisions with molecules. Hence he constructed a model governing its motion with respect to nearby particles. Since then, the stochastic development of this approach has been incorporated into all the natural sciences, engineering, linguistics, finance, economics, and even the social sciences.\par

`Chemotaxis' is a biological phenomena by which organisms change their state of movements either toward or away from the chemical substance. This migration can be seen in cells ranging from bacteria to mammal. Cells of organism senses the higher gradient of chemoattractants and move in that direction. During this process of movement towards the chemical gradient, in a detailed inspection, the motion created by each individual cell appears to be erratic. This randomicity arises not only from the chemotactic response but also from the random jumps of cells. We argued that the Einstein's  theoretical framework of Brownian motion can describe the chemotactic response and random motion of organism. \par

In addition, we consider the formation of crowd by organism via interactions within or between the community. Many or most bacteria conduct cell-cell communication secreting chemical molecules, knows as, Quorum sensing \cite{quorum01}. This communication can happen both within and between bacterial species due to the presence of signal molecule, called "autoinducers" \cite{GOBBETTI200734} . In food related pathogen, after autoinducers make the bacteria aware of the existence of food, a certain threshold concentration of bacteria should be formed to trigger the event where independent bacteria accumulate into that formation and behave collectively.  This collective network bestows upon bacteria some advantages such as ability to migrate to a better environment containing more favourable resources or grow in a more cooperative fashion and increase the chance of survival and thriving. The suitability of our model, as per our expectation, is not just restricted to bacteria-sugar relation but also for any prey-predator interaction. For instance, a quorum response in vertebrate animal groups such as three-spine sticklebacks fish can be seen to play a role in the movement decisions of fish \cite{Ward6948}. Vertebrates use social cues and signal from a group and responds to the behavior if a certain threshold number of members is present in that group.\par

In this study, our assumptions involve two movements and interactions of organisms that takes in place simultaneously: interactions between and within organisms and movement of organisms towards substrate. Hence the formations of the crowd get affected by the response of organism to the presence of chemical substrate. Therefore, the distance between any two entity is proportional to the change in the distance between the entity and the food. Note that, the growth or reproduction is excluded from our model as traveling band is possible even in the absence of multiplicative cell. Also, chemical interactions between chemical substrates which form new components have not been considered.\par

In the Section~\ref{Einstein}, we will derive the chemotactic model motivated by Einstein's random walk model. We present exhibition of traveling band in two cases: environment with unlimited supply of food described in the Section~\ref{unlimited} and environment with limited supply of food explained in the Section~\ref{limited}. In the Section~\ref{dis}, some numerical results will be presented in support of our findings.

%%%%%%%%%%%%%%%%%%%%%%%%%%%%%%%%%%%%%%%%%%%%%%%%%%%%%%%%%%%%%%%%%%%%%%%%%

\section{Derivation of Einstein's model with consumption/reaction term}
\label{Einstein}

To formulate the partial differential equation (PDE) model, an existence of time interval $\tau$ between the collision of two particles is required. The interval $\tau$ is ``sufficiently small'' compared to the time scale $t$ of observation of the physical process, but not so small that the motions become correlated. Suppose $u(x,t)$ is the number of the particles (such as bacteria, glucose or predator, prey etc) per unit volume (density or concentration). Then, we will consider the following Einstein's general conservation law which gives the number of particles found at time $t+ \tau$ between two planes perpendicular to the $x$-axis, with abscissas $x$ and $x+ d x$, is given by

\begin{equation}\label{Eins_b_sys}
u(x, t+\tau) \cdot dx=  \left(\int_{-\infty}^{\infty} u(x+ \Delta, t) \varphi(\Delta) d \Delta +\int_{t}^{t+\tau} f(x,\xi)d\xi\right)\cdot dx.
\end{equation}

Time interval ($\tau$), distance traveled during the free jump ($\Delta$) and probability density function of jump ($\varphi$) can be functions of spatial distance $x$ and the time variable $t$ and of any other physical quantity such as density or number of particles etc. In our case, we will assume, for now, $\tau$ to be independent of concentration of particles $u$. And $\varphi(\Delta)$ is fixed with respect to $u(x,t)$. During the time interval $[t,t+\tau]$ in the unit volume around the particle located at the observation point $x$, it is possible that absorption and/or reaction with other particles (or with the suspending medium) occur. In Eq.~\eqref{Eins_b_sys}, $f(x,t)$ has been defined as the growth of the crowd of particles due to the chemotactic response per unit volume or the consumption rate by the particles per unit volume.\par 

Also we define the following basic properties:
\begin{definition}\label{axiom:Delta-e}(Expected value of the length of free jump)
\begin{equation*}
\Delta_{e}=\int \Delta\varphi(\Delta)d\Delta.
\end{equation*}
\end{definition}

\begin{definition}\label{sigma-b}(Standard variance of free jump)
\begin{equation*}
\sigma^2= \int(\Delta-\Delta_{e})^2 \varphi(\Delta) d \Delta.
\end{equation*}
\end{definition}

Now by Caratheodory theorem on differentiability  \cite{rah20}, there exists a function $\psi_{1}(x,t)$ such that for any smooth function $u(x,t)$
\begin{equation*}\label{Caratheodory}
u(x, t+ \tau)= u(x, t)+ \tau \psi_{1}(x,t+\tau) ,
\end{equation*}
where
\begin{equation*}
  \lim_{\tau\to 0} \psi_{1}(x,t+\tau)= \frac{\partial u(x,t)}{\partial t},
\end{equation*}
or
\begin{equation*}
 \psi_{1}(x,t+\tau)\approx \frac{\partial u(x,t)}{\partial t}.
\end{equation*}

similarly, for functions $\psi_{2}(x + \Delta ,t)$ and $\psi_{2}(x + \Delta ,t)$,
\begin{align*}
 \psi_{2}(x + \Delta_e ,t)&\approx \frac{\partial u(x,t)}{\partial x},\\
 \psi_{3}(x + \Delta_e ,t)&\approx \frac{\partial^2 u(x,t)}{\partial x^2}.
\end{align*}

And
\begin{equation*}\label{Caratheodory-x}
u(x+\Delta, t)= u(x, t)+ \Delta \psi_{2}(x+\Delta,t+\tau)\approx u(x,t)+ \Delta\frac{\partial u(x,t)}{\partial x}. 
\end{equation*}

Using above generic properties, we add and subtract $u(x+\Delta_e, t)$  on the right hand side of the Eq.~\eqref{Eins_b_sys} and then we compute as following
\begin{align}
&u(x, t+\tau)- u(x+ \Delta_e, t)\cdot dx = \nonumber\\
&\Bigg(\int_{-\infty}^{\infty} \bigg(u(x+ \Delta, t)-u(x+ \Delta_e,t)\bigg) \varphi(\Delta) d \Delta +\int_{t}^{t+\tau} f(x,\xi)d\xi \Bigg)\cdot dx. \nonumber 
\end{align}

After applying Charatheodory theorem for derivatives, we will get 
\begin{align}
&u(x,t)+\tau \psi_{1}(x,t+\tau)-u(x,t)-\Delta_{e} \psi_{2}(x+\Delta_{e},t) = \nonumber\\
&\int_{-\infty}^{\infty} \bigg(\psi_{2}(x+\Delta_{e},t) (\Delta-\Delta_e) +\psi_{3}(x+\Delta_e,t) (\Delta-\Delta_{e})^2\bigg)\varphi(\Delta) d \Delta \nonumber\\
& +\int_{t}^{t+\tau} f(x,\xi)d\xi .\nonumber
\end{align}

Using properties of the function $\psi$ for first and second derivatives in the vicinity of the point $(x,t)$, we will get
\begin{align}
&\tau \frac{\partial u}{\partial t}-\Delta_{e}\frac{\partial u}{\partial x} = \frac{\partial u}{\partial x}\int_{-\infty}^{\infty}( \Delta-\Delta_e) \varphi(\Delta)d \Delta+\frac{1}{2}\frac{\partial^2 u}{\partial x^2}\int_{-\infty}^{\infty}  (\Delta-\Delta_{e})^2 \varphi(\Delta) d \Delta  \nonumber\\
&+\int_{t}^{t+\tau} f(x,\xi)d\xi. \label{eins der}
\end{align}

With Definitions~\ref{axiom:Delta-e} and \ref{sigma-b}, Eq.~\eqref{eins der} becomes

\begin{equation}\label{final ein der}
    \tau \frac{\partial u}{\partial t}=\Delta_{e}\frac{\partial u}{\partial x}+\frac{1}{2} \sigma^2 \frac{\partial^2 u}{\partial x^2}+\int_{t}^{t+\tau} f(x,\xi)d\xi.
\end{equation}

%%%%%%%%%%%%%%%%%%%%%%%%%%%%%%%%%%%%%%%%%%%%%%%%%%%%%%%%%%%%%%%%%%%%%%%%%

\section{Derivation of chemotactic system when the availability of substrate is unlimited}\label{unlimited}
Let $u(x,t)$ and $v(x,t)$ be the concentration of organism and chemical substrate (food or any chemical attractor) per unit volume respectively with $x$ being the distance along the tube and $t$, the time.

\begin{figure}
    \centering
    \includegraphics[width=1\textwidth]{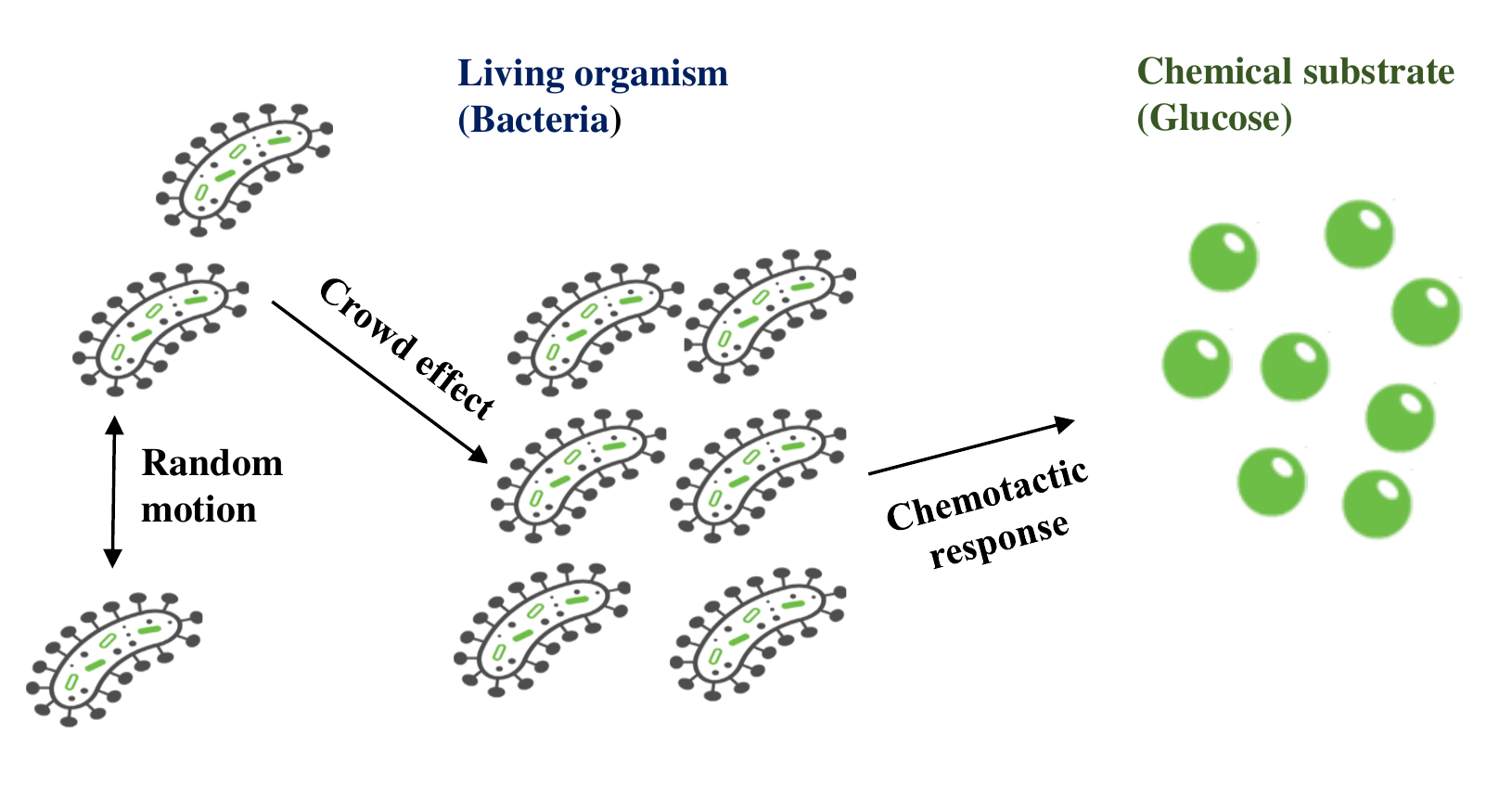}
    \caption{A virtual representation of the interactions between organism (bacteria) and chemical substrates (Glucose).}
    \label{fig:bacteria_food}
\end{figure}

The corresponding expression of the Eq.~\eqref{final ein der} for organism is
\begin{equation}\label{c_final ein der}
    \tau_{u} \frac{\partial u}{\partial t}=\Delta_{e,u}\frac{\partial u}{\partial x}+\frac{1}{2} \sigma^2_{u} \frac{\partial^2 u}{\partial x^2}+\int_{t}^{t+\tau_{u}} f_{u}(x,\xi)d\xi.
\end{equation}

\begin{hypo}\label{prob func on s}
The chemotactic response of the organism $u(x,t)$ in the medium is influenced by the presence of chemical substrate $v(x,t)$. Therefore we hypothesize that probability density function $\varphi$ not only depends on the length of free jumps $\Delta$ but also on the spatial gradient of concentration of substrate $v(x,t)$ present in the medium. To be more specific, we assume that chemotactic response, which causes the event of movement of the organism towards food (or any attractor), is proportional to relative changes of $v$ in space with respect to the amount of food. Then by Definitions~\ref{axiom:Delta-e} and \ref{sigma-b}, $\Delta_{e,u}$ and $\sigma_{u}$ also depend on $v(x,t)$. Mathematically, the dynamics of directed movement characterised by expected value of free jump  $\Delta_{e,u}$ is 
\begin{equation}\label{Delab-def-A}
 \Delta_{e,u}(v) = -\beta \frac{1}{v} \frac{\partial v}{\partial x}= -\beta \frac{\partial \ln v}{\partial x}
 \end{equation}
with $\beta$ being a positive chemotactic coefficient and having dimension $ [L^2]$, which can be interpreted as a chemotactic factor for classification of the living organism.  
\end{hypo}

\begin{assumption}
Although in real life, standard deviation is a composite parameter depending on $v, \ \nabla  v, \ u, \nabla u, \ x, \ t, etc$,  in this article, we consider the dynamics of processes with constant standard deviation. 
Namely
\begin{equation}\label{sigma-def}
 \sigma^{2}_{u} (v)= \mu,   
\end{equation}
where $\mu$ is the motility parameter or diffusion coefficient of the organism with dimension $[L^2]$. 
We also assume that chemotactic factor $\beta$ is constant. Both $\mu$ and $\beta$  can be obtained from analyses of the dynamics of process, using image processing. 
\end{assumption}

\begin{hypo}\label{croud-movement}
$f_u$ is the number of organism per unit volume that form the crowd through quorum sensing in the domain containing chemical substrate. Therefore, $f_u$ depends on both $u$ and $v$. Let $\gamma_{0}$ is the rate of certain threshold concentration of organism (quorum) to be present in the crowd formation to trigger the event of accumulation of organism. We will call the coefficient $\gamma_0$ quorum rate.
Therefore, we define the rate of the movement of the organism to be proportional to the gradient (spatial changes) of expected free jump of substrate $v(x,t)$ per cell,
 \begin{equation}\label{Fb-def}
 \int_{t}^{t+\tau_{u}} f_u(x,\xi)d\xi \approx \tau_u u(x,t) F_u (u,v) = \tau_{u} u \gamma_{0}\frac{\partial \Delta_{e,u} (v)
 }{\partial x} =-u \tau_{u} \gamma \frac{\partial^2 \ln v
 }{\partial x^2}.
\end{equation}
Here $\gamma = \beta \gamma_{0}$ is the crowd effect stimulation coefficient, a positive constant with dimension $[ \frac{L^2}{T}]$. 
\end{hypo}

Therefore, under above assumptions we get,

\begin{equation}\label{b_sys}
         \tau_u \frac{\partial u}{\partial t}  = -\beta \frac{\partial \ln v}{\partial x}\frac{\partial u}{\partial x}+ \frac{\mu}{2}\frac{\partial^2 u}{\partial x^2} -\tau_{u}\gamma u\frac{\partial^2 \ln v}{\partial x^2}.
\end{equation}
 The first term on the right hand side of Eq.~\eqref{b_sys} is the chemotactic response of the organism, i.e., part of the flux of organism due to chemotaxis is proportional to the chemical gradient. The second term is the change in the density of organism due to random motion.  And the last term on the right represents the crowd formed by the complex interactions between organism- organism and organism-substrate.\par

And the concentration $v(x,t)$ of chemical substrate can be given by the equation,
\begin{equation}\label{der_s_sys}
    \tau_v \frac{\partial v}{\partial t} = \frac{\partial v}{\partial x} \Delta_{e,v} +\frac{1}{2}\sigma^2_v\frac{\partial^2 v}{\partial x^2}+ \int_{t}^{t+\tau_{v}} f_v(x,\xi) d\xi
\end{equation}

\begin{assumption} Food (chemical substrate) is considered to be immovable, so no chemical interaction between particles of substrates is possible under our assumption. Hence,
$$\Delta_{e,v}=0,$$
and
$$\sigma^{2}_{v} = D,$$ 
with $D$ being the diffusion constant of chemical substrate. 
\end{assumption} 

\begin{assumption} $f_v$ is defined to be the consumption by substrate cells, 
\begin{equation*}\label{cons-of-s}
\int_{t}^{t+\tau_{v}} f_v(x,\xi) d\xi= \tau_{v} F_v(u,v)= - \tau_{v}k(v)u,
\end{equation*}
where $k(v)$ is the rate of consumption of the substrate with dimension $[\frac{1}{T}]$.
\end{assumption}

Under assumptions, Eq.~\eqref{der_s_sys} can be written as
\begin{equation*}\label{s_sys}
    \tau_{v} \frac{\partial v}{\partial t}  = -\tau_{v} k(v)u+ D \frac{\partial^2 v}{\partial x^2}.
\end{equation*}

\begin{assumption}
We will assume $D=0$ and in the presence of abundance of substrate, the rate of the consumption of the food $k(v)$ does-not depend on the concentration of the food.Therefore,
\begin{equation}\label{k-const}
    k(v)=k=constant.
\end{equation}
\end{assumption}

Finally, the reduced system of equations are
\begin{subequations}\label{mod2}
 \begin{align}
     L_1 u &=\tau \frac{\partial u}{\partial t} +\beta \frac{\partial \ln v}{\partial x}\frac{\partial u}{\partial x}- \frac{\mu}{2}\frac{\partial^2 u}{\partial x^2} +\tau \gamma u\frac{\partial^2 \ln v}{\partial x^2}= 0 \label{red b_sys}\\
     L_2 v &=\frac{\partial v}{\partial t} +k u\label{red s_sys}= 0
 \end{align}
 \end{subequations}
 
 In above system, for simplicity,  $\tau=\tau_u$.
 We will consider so called family of solutions with no initial data that exhibits traveling wave phenomena. Namely, we will construct baseline solution which, at any given time $t$, will have the shifted invariant.
The only constraint which will be imposed will be at $\pm\infty.$

\begin{remark}
$\gamma_0 \propto \frac{1}{\tau}$. Since, concentration of organism $u \propto \frac{1}{\tau}$, then $u \propto \gamma_0$. Which can be interpreted physically as in the presence of less concentration of organism the quorum rate can afford to be smaller to stimulate the crowd effect phenomena. 
\end{remark}

\begin{definition}\label{travel_band} (Traveling Band)
Consider x vary from $+\infty$ to $-\infty$. A system exhibit traveling band if the solutions are in the following form
\begin{equation}\label{chng var}
    u(x,t)=u(\zeta) \text{,    } v(x,t)=v(\zeta) \text{,     } \zeta=x-c t
\end{equation}
where $c>0$ is the constant band speed.
\end{definition}

The following theorems can be proved to show that the system described above exhibits traveling wave phenomena.

\subsection{ Model without crowd effect for unlimited substrates
%Case when quorum rate is equal to the inverse of the time interval of free jump of organism
}\label{unlimited_alpha=0}
If $\beta - \gamma \tau = 0$, i.e., $\gamma_0 =\frac{1}{\tau}$, then our model gives the following form similar to the classic Keller-Segel model \cite{KELLER1971225} \& \cite{KELLER1971235}:
\begin{subequations}\label{mod1}
 \begin{align} 
     L_{1,0} u &=\tau \frac{\partial u}{\partial t} +\beta \frac{\partial}{\partial x}\left(u \frac{\partial \ln v}{\partial x}\right)- \frac{\mu}{2}\frac{\partial^2 u}{\partial x^2} = 0\label{red b_sys_eq}\\
     L_{2,0} v &=\frac{\partial v}{\partial t} +k u\label{red s_sys_eq}= 0
 \end{align}
\end{subequations}

\begin{theorem}\label{unlimited-food}
With the solution in the form of Eq.~\eqref{chng var}, the system \eqref{mod1} exhibit traveling band form in solution.
\end{theorem}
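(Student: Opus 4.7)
The plan is to plug the traveling-wave ansatz $u(x,t)=u(\zeta)$, $v(x,t)=v(\zeta)$, $\zeta=x-ct$, directly into the two equations of \eqref{mod1}, reduce the resulting system to a single separable ODE, integrate it explicitly, and then verify that the resulting profile behaves like a band.

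First I would apply the chain rule, so that $\partial_t \mapsto -c\, d/d\zeta$ and $\partial_x \mapsto d/d\zeta$. Equation \eqref{red s_sys_eq} immediately collapses to $-cv' + ku = 0$, giving the algebraic link $u = (c/k)\,v'$. Substituting this into \eqref{red b_sys_eq} produces
\[
-c\tau u' + \beta \Bigl(\frac{u v'}{v}\Bigr)' - \frac{\mu}{2} u'' = 0.
\]
Every term is a perfect derivative in $\zeta$, so a single integration together with the far-field decay $u,\,u',\,v' \to 0$ at $\pm\infty$ kills the constant of integration and leaves
\[
-c\tau u + \beta u (\ln v)' - \frac{\mu}{2} u' = 0.
\]
Dividing by $u$ on its support and integrating once more gives the algebraic reduction $u = K\, v^{2\beta/\mu}\, e^{-2c\tau\zeta/\mu}$ for a positive constant $K$.

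Combining this with $u=(c/k)\,v'$ yields the separable first-order ODE
\[
v^{-2\beta/\mu}\, v' = \frac{Kk}{c}\, e^{-2c\tau\zeta/\mu},
\]
which integrates in closed form provided $2\beta/\mu \neq 1$. By tuning $K$ together with the integration constant and the wave speed $c$, I would arrange the monotone profile $v(-\infty)=0$ and $v(+\infty)=v_\infty$ finite. The band $u=(c/k)v'$ is then automatically positive, vanishing at both infinities, which is precisely the traveling-band shape required by Definition~\ref{travel_band}.

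The main obstacle is the asymptotic matching: the two free constants and $c$ must be chosen so that $v^{1-2\beta/\mu}$ is consistent with a finite positive limit at $+\infty$ and with $v\to 0^+$ at $-\infty$. A sign analysis of the integrated relation shows that this is possible precisely when $2\beta/\mu > 1$, i.e.\ when chemotactic sensitivity dominates random motility. I would record this as the parameter hypothesis under which the band exists, and observe that the resulting wave coincides with the classical Keller--Segel traveling band, now recovered from the Einstein-style derivation in Section~\ref{Einstein}.
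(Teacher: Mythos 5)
Your proposal follows essentially the same route as the paper's proof: substitute the ansatz, reduce \eqref{red s_sys_eq} to the algebraic link $u=(c/k)v'$, integrate the organism equation twice to obtain $u=C_1 v^{2\beta/\mu}e^{-2\tau c\zeta/\mu}$, then solve the resulting separable ODE for $v$ and check the limits at $\pm\infty$ under the same constraint $d=2\beta/\mu>1$. The argument is correct and matches the paper's in all essentials.
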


\begin{proof}
With the convention $\frac{d}{d\zeta}='$ and Eq.~\eqref{chng var}, the Eqs.~\eqref{red b_sys_eq} and \eqref{red s_sys_eq} are reduced to

\begin{align}
    L_{1,0} u &= \tau c u^{'} - \beta \big(u v^{-1} v^{'} \big)^{'}+ \frac{\mu}{2} u^{''} \label{ch red b_sys}=0,\\
    L_{2,0} v &= c v^{'}  - k u\label{ch red s_sys}=0.
\end{align}

And the appropriate conditions at $\pm\infty$ are
\begin{equation}\label{bd_cond}
    u \xrightarrow{} 0 \text{,    } u^{'} \xrightarrow{} 0 \text{,     } v \xrightarrow{} v_\infty \text{,      as   } \zeta \xrightarrow{} \infty
\end{equation}
where $v_{\infty}$ is positive constant.

First integrate Eq.~\eqref{ch red b_sys} once and obtain,
\begin{equation}\label{first_int_b}
    \tau c u - \beta u v^{-1} v^{'} + \frac{\mu}{2} u^{'} + constant=0.
\end{equation}

By conditions \eqref{bd_cond}, constant is 0. Then dividing the Eq.~\eqref{first_int_b} by $u$, we have

\begin{equation*}\label{(ln u)'-equation}
   (\tau c\zeta +\frac{\mu}{2}\ln u)' = (\beta \ln v)^{'}.
\end{equation*}

Integrating gives,
\begin{equation}\label{soln b_sys_old}
    u= C_{1} v^{\frac{2 \beta}{\mu}} e^{-\frac{2\tau c \zeta}{\mu}}.
\end{equation}
Here $C_{1}$ is the constant of integration and is positive.

Substituting the expression of $u$ into Eq.~\eqref{ch red s_sys} and integrating with conditions \eqref{bd_cond}, we get

\begin{equation}\label{soln s_sys}
    v=\bigg[\frac{1}{2}C_{1} k c^{-2}\tau^{-1} \mu (\frac{2 \beta}{\mu}-1) e^{-\frac{2\tau c \zeta}{\mu}}+v_{\infty}^{-\frac{2 \beta}{\mu}+1}\bigg]^{-\frac{1}{\frac{2 \beta}{\mu}-1}}.
\end{equation}

If we consider the constrain
\begin{equation}\label{mu_res}
   d = \frac{2 \beta}{\mu}>1  %=\int (\Delta - \Delta_e)^2 \varphi(\Delta) d\Delta <2\gamma
\end{equation}

The solution \eqref{soln b_sys_old} satisfies,
\begin{equation*}
    \lim_{\zeta \xrightarrow{} \infty}u = 0 \text{     and     } \lim_{\zeta \xrightarrow{} -\infty}u = 0.
\end{equation*}

Also, due to the assumption \eqref{mu_res}, Eq.~\eqref{soln s_sys} exhibits the following behavior, 
\begin{equation*}
    \lim_{\zeta \xrightarrow{} \infty}v = v_{\infty} \text{     and     } \lim_{\zeta \xrightarrow{} -\infty}v = 0.
\end{equation*}

\end{proof}

\begin{proposition}
Consider the setting $\frac{1}{2}C_{1} k c^{-2}\tau^{-1}\mu (d-1)=v_{\infty}^{-\frac{2\beta}{\mu}+1}$ with the intention of achieving simplest expression for $v(\zeta)$ in Eq.~\eqref{soln s_sys}. Then function of $u$ is not monotone and maximum is achieved with the value,
\begin{equation*}
    u_{max} = 2 c^{2} \tau k^{-1} \mu^{-1} v_{\infty} d^{-(\frac{d}{d-1})}
\end{equation*}

at \begin{equation*}
    \zeta=\frac{\mu}{2\tau c}\ln(\frac{1}{d -1}).
\end{equation*}
Where the function $v$ is monotonically increasing from $zero$ to constant $v_\infty$.
Note that, $v_\infty$ is the certain threshold concentration of food that initiates the consumption of food by any living organism.
\end{proposition}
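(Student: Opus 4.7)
The plan is to substitute the normalization $\tfrac{1}{2}C_{1} k c^{-2}\tau^{-1}\mu (d-1)=v_{\infty}^{-(d-1)}$ directly into the closed-form solutions produced in the proof of Theorem~\ref{unlimited-food}, then reduce the extremization of $u(\zeta)$ to a routine one-variable calculus problem in the auxiliary variable $y=e^{-2\tau c\zeta/\mu}$, and finally eliminate $C_{1}$ in favor of $v_{\infty}$.

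First I would plug the normalization into Eq.~\eqref{soln s_sys}. The bracketed quantity collapses to $v_{\infty}^{-(d-1)}\bigl(e^{-2\tau c\zeta/\mu}+1\bigr)$, producing the compact expression $v(\zeta)=v_{\infty}\bigl[e^{-2\tau c\zeta/\mu}+1\bigr]^{-1/(d-1)}$. Since $d>1$ by \eqref{mu_res} and the bracket is a strictly decreasing positive function of $\zeta$ tending to $1$ as $\zeta\to\infty$ and to $\infty$ as $\zeta\to-\infty$, monotone increase of $v$ from $0$ to $v_{\infty}$ is immediate.

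Next, I would substitute this $v$ into $u=C_{1} v^{d} e^{-2\tau c\zeta/\mu}$ from Eq.~\eqref{soln b_sys_old} and set $y=e^{-2\tau c\zeta/\mu}$, obtaining $u=C_{1} v_{\infty}^{d}\, y\,(y+1)^{-d/(d-1)}$ for $y\in(0,\infty)$. Because $\zeta\mapsto y$ is a strictly decreasing bijection, extrema of $u$ in $\zeta$ coincide with those in $y$. Differentiating in $y$ and factoring out $(y+1)^{-d/(d-1)-1}$ leaves the linear factor $1-y/(d-1)$, yielding a unique critical point $y^{\ast}=d-1$; the derivative changes sign from positive to negative there, so this interior extremum is a strict maximum, confirming in particular that $u$ is not monotone. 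Inverting the substitution gives $\zeta^{\ast}=\frac{\mu}{2\tau c}\ln\bigl(\frac{1}{d-1}\bigr)$, and evaluating the formula at $y^{\ast}$ produces $u_{max}=C_{1}v_{\infty}^{d}(d-1)\,d^{-d/(d-1)}$.

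Finally, solving the normalization relation for $C_{1}=2c^{2}\tau v_{\infty}^{-(d-1)}/[k\mu(d-1)]$ and inserting it into the previous expression cancels the $(d-1)$ and $v_{\infty}^{-(d-1)}$ factors, leaving exactly $u_{max}=2c^{2}\tau k^{-1}\mu^{-1}v_{\infty}\,d^{-d/(d-1)}$ as claimed. The main obstacle is genuinely just bookkeeping: one must track the powers of $v_{\infty}$ and the $(d-1)$ factors so that they cancel cleanly when $C_{1}$ is eliminated. No analytic subtlety beyond a standard one-variable extremization is involved, and the verification that $y^{\ast}=d-1$ lies strictly in $(0,\infty)$ — which is needed for the critical point to correspond to a real $\zeta^{\ast}$ — is automatic from the hypothesis $d>1$.
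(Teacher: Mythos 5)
Your proposal is correct and follows essentially the same route as the paper: both substitute the normalization to get the explicit closed forms, reduce the extremization of $u$ to a one-variable calculus problem (the paper differentiates logarithmically in $\zeta$, you in the equivalent variable $y=e^{-2\tau c\zeta/\mu}$), and recover the same critical point and maximum value. The only cosmetic difference is that you eliminate $C_{1}$ at the end rather than at the start, which changes nothing substantive.
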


\begin{proof}
With Eq.~\eqref{soln s_sys}, we get 
\begin{equation}
    v = v_{\infty} \big(1 + e^{-\frac{2 \tau c}{\mu}\zeta}\big)^{-\frac{1}{d-1}}\label{cor_s_sys_mod1}
\end{equation}

and the corresponding expression for $u$ is
\begin{align}
    u 
    = &\frac{2 c^{2} \tau k^{-1} \mu^{-1}}{d -1} v_{\infty} \big(e^{-\frac{2\tau c}{\mu}\zeta}+1\big)^{-\frac{d}{d - 1}}e^{-\frac{2\tau c}{\mu}\zeta}.\label{cor_b_sys_mod1}
\end{align}

Differentiating Eq.~\eqref{cor_b_sys_mod1} with respect to $\zeta$, we get
\begin{align*}
    u^{'} &= -\frac{2\tau c}{\mu} u  \Big(1-\frac{d}{d -1}  \big(1+e^{\frac{2\tau c}{\mu}\zeta}\big)^{-1}\Big) .
\end{align*}

Then, $u_{max}$ occurs at $\zeta=\frac{\mu}{2\tau c}\ln(\frac{1}{d-1})$. And, so
\begin{align*}
    u_{max} &= 2 c^{2} \tau k^{-1} \mu^{-1} v_{\infty} d^{-(\frac{d}{d-1})}.
\end{align*}
\end{proof}

%%%%%%%%%%%%%%%%%%%%%%%%%%%%%%%%%%%%%%%%%%%%%%%%%%%%%%%%%%%%%%%%%%%%%%%%%

\subsection{ Model with crowd effect for unlimited substrates
%Case when quorum rate is perturbed w.r.t. base line behaviour
}
\label{lim_alpha_neq_0}
In this section, we consider biological system when quorum rate is $\gamma_0\neq \frac1\tau$. We will also assume that pattern for the  food itself is scaled by factor $\exp{(\lambda t)}$ compare to the base-line case, and is subject to scale the traveling-band pattern such in previous section.\par

If $\alpha = \gamma \tau- \beta \neq 0$, then Eqs.~\eqref{red b_sys} and \eqref{red s_sys} are reduced to:
\begin{align}
    L_{1} u &=\tau \frac{\partial u}{\partial t} +\beta \frac{\partial }{\partial x}\bigg(\frac{\partial \ln v}{\partial x} u\bigg)- \frac{\mu}{2}\frac{\partial^2 u}{\partial x^2} +(\gamma \tau- \beta) u\frac{\partial^2 \ln v}{\partial x^2}=L_{1}u+\alpha u \frac{\partial^2 \ln v}{\partial x^2}=0,\label{mod1:not_zero_b}\\
    L_{2} v &= \frac{\partial v}{\partial t} + ku=0. \label{mod1:not_zero_s}
\end{align}

\begin{theorem}\label{thm_mod2}
Assume that
\begin{equation}\label{L0u0=0}
    L_{1,0}u=0,\text{   and   } L_{2,0}v = 0
\end{equation}
We compute
\begin{equation}\label{ln_constr}
    \max \bigg|\frac{\partial^2 \ln v}{\partial x^2}\bigg|=B = \frac{1}{d-1} \frac{ \tau^{2} c^{2}}{\mu^{2}}>0.
\end{equation}
Here $B$ is a positive constant with dimension $[L^{-2}]$. Then there exists constants $\lambda_{\pm}$ such that  
\begin{equation}\label{u+}
u_{\pm}(x,t)=e^{\lambda_{\pm} t}u(x,t)   \text{   and   } v_{\pm} = e^{\lambda_{\pm} t} v_(x,t)  
\end{equation}  
solves the partial differential inequality
\begin{equation*}
L_{\alpha,1}u_{+}\geq 0 \text{ ,and  }  L_{\alpha,1}u_{-}\leq 0. 
\text{   and   }   L_{\alpha,2}v_{+}\geq 0 \text{ ,  } L_{\alpha,2}v_{-}\leq 0 
\end{equation*}
\end{theorem}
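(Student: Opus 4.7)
The plan is to exploit the fact that the baseline pair $(u,v)$ from Theorem~\ref{unlimited-food} kills the reduced operators $L_{1,0}$ and $L_{2,0}$, so that each full operator $L_{\alpha,i}$ applied to a time-scaled version of the baseline reduces to a single residual term. One then picks $\lambda_\pm$ large enough in absolute value to dominate the pointwise bound $B$ on $|\partial_x^2 \ln v|$.

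The first step is to verify Eq.~\eqref{ln_constr}. From the explicit profile in Eq.~\eqref{cor_s_sys_mod1},
\[
\ln v = \ln v_\infty - \frac{1}{d-1}\ln\bigl(1+w\bigr), \qquad w := e^{-2\tau c\zeta/\mu},
\]
and since $\zeta = x-ct$ the $x$-derivatives coincide with $\zeta$-derivatives up to chain-rule factors. Differentiating twice yields
\[
\frac{\partial^2 \ln v}{\partial x^2} = -\frac{1}{d-1}\,\frac{4\tau^2 c^2}{\mu^2}\,\frac{w}{(1+w)^2}.
\]
A one-variable calculus check shows $w \mapsto w/(1+w)^2$ attains maximum $1/4$ at $w=1$, giving exactly $B = \tau^2 c^2 / \bigl(\mu^2(d-1)\bigr)$ as claimed.

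Next I would substitute $u_\pm = e^{\lambda_\pm t} u$ and $v_\pm = e^{\lambda_\pm t} v$ into the full operators Eqs.~\eqref{mod1:not_zero_b}--\eqref{mod1:not_zero_s}. The key observation is that $\ln v_\pm = \lambda_\pm t + \ln v$, so the drift and crowd-effect coefficients $\partial_x \ln v_\pm$ and $\partial_x^2 \ln v_\pm$ agree with those of $v$; the exponential factor pulls cleanly outside every $x$-derivative. Using $L_{1,0}u=0$ and $L_{2,0}v=0$ one gets
\begin{align*}
L_{\alpha,1} u_\pm &= e^{\lambda_\pm t}\, u\bigl(\tau \lambda_\pm + \alpha\, \partial_x^2 \ln v\bigr), \\
L_{\alpha,2} v_\pm &= e^{\lambda_\pm t}\,\lambda_\pm\, v.
\end{align*}

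The final step is to select $\lambda_+ := |\alpha|B/\tau \geq 0$ and $\lambda_- := -|\alpha|B/\tau \leq 0$; the bound $|\alpha\,\partial_x^2 \ln v| \leq |\alpha| B = \tau \lambda_+ = -\tau \lambda_-$ together with $u,v>0$ then forces the signs of both brackets, which gives all four desired inequalities. The genuinely substantive step is the pointwise estimate in step two: without the explicit logistic-type profile of $v$ furnished by Theorem~\ref{unlimited-food} there is no a~priori reason for $\partial_x^2 \ln v$ to be uniformly bounded in $x$, and the scheme would collapse. Once $B$ is in hand, the remaining bookkeeping on the residuals is routine.
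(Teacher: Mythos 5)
Your proposal is correct and follows essentially the same route as the paper's own proof: substitute the exponentially scaled pair into the full operators, use $L_{1,0}u=0$, $L_{2,0}v=0$ together with the fact that $\partial_x \ln v_{\pm}=\partial_x \ln v$ to reduce everything to the residual $e^{\lambda_{\pm}t}u\left(\tau\lambda_{\pm}+\alpha\,\partial_x^2\ln v\right)$ and $e^{\lambda_{\pm}t}\lambda_{\pm}v$, and then choose $\lambda_{\pm}=\pm|\alpha|B/\tau$ to fix the signs. The only differences are cosmetic: you verify the bound \eqref{ln_constr} explicitly (the maximum $1/4$ of $w/(1+w)^2$), which the paper asserts, and your use of $|\alpha|$ covers both signs of $\alpha$, whereas the paper takes $\alpha>0$ and handles $\alpha<0$ in a subsequent remark.
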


\begin{proof}
Note that, $\ln v_{\pm} = \gamma \tau + \ln v$.\par

Considering the mapping \eqref{u+} and Eq.~\eqref{L0u0=0}, Eq.~\eqref{mod1:not_zero_b} \& \eqref{mod1:not_zero_s} becomes
\begin{align*}
    L_{1} u_{\pm} 
    &= \big(\lambda_{\pm} \tau +\alpha \frac{\partial^2 \ln v}{\partial x^2} \big) e^{\lambda_{\pm} t} u(x,t)  \label{u-pm-gen}\\
    &= \bigg(\tau \lambda_{\pm} -\alpha \frac{1}{d-1}\frac{4 \tau^2 c^2}{\mu^2} e^{\frac{2 \tau c}{\mu}\zeta}(1+e^{\frac{2 \tau c}{\mu}\zeta})^{-2} \bigg) e^{\lambda_{\pm} t} u(x,t), \\
    L_{\alpha} v_{\pm} & = \lambda e^{\lambda_{\pm} t}  v(x,t) 
\end{align*}
Using the computation \eqref{ln_constr} and assuming the existence of  
\begin{equation}\label{lambda-}
     \lambda_{-}= -\frac{\alpha B}{\tau}
\end{equation}
with $\alpha>0$, it follows
\begin{align*}
    L_{\alpha} u_{-} & \leq 0\\
    L_{\alpha} v_{-} & \leq 0
\end{align*}

Similarly,
\begin{align}\label{lambda+}
    \lambda_{+}= \frac{\alpha B}{\tau}
\end{align}
with $\alpha>0$ gives us
\begin{align*}
    L_{\alpha} u_{+} & \geq 0\\
    L_{\alpha} v_{+} & \geq 0
\end{align*}

Therefore, due to maximum principle, if $u$ and $v$ are the analytical solutions of the system \eqref{mod1} then there exists $\lambda_{\pm}$ given by Eqs.~\eqref{lambda-} and \eqref{lambda+} such that
\begin{align*}
    e^{\lambda_{-} t} u &\leq u \leq e^{\lambda_{+} t} u\\
    e^{\lambda_{-} t} v &\leq v \leq e^{\lambda_{+} t} v
\end{align*}
\end{proof}

\begin{remark}
For the system \eqref{mod2}, the analytical solution is not achievable for unlimited source of substrates. But, we can estimate lower and upper estimates, $u_{-}$ and $u_{+}$ respectively, for the solution $u$ that exhibits traveling band. Also, if $\alpha<0$ in Thm.~\ref{thm_mod2} then for the same values in Eqs.~\eqref{lambda-} and \eqref{lambda+}, the lower and upper estimates follow as $u_{+} \leq u \leq u_{-}$.
\end{remark}

\begin{remark}
%$\lambda \prop \big(1-\frac{1}{d}\big)^{-1}$. 
Dimension of $\alpha$ is $[L^2]$ and therefore dimension of $\lambda$ is $ [\frac{1}{T}]$. In that sense, $u_{\pm}$ and $v_{\pm}$ are dimensionless.
\end{remark}

%%%%%%%%%%%%%%%%%%%%%%%%%%%%%%%%%%%%%%%%%%%%%%%%%%%%%%%%%%%%%%%%%%%%%%%%%

\section{Derivation of Chemotactic system when the availability of substrate is limited:}
\label{limited}
if the  unavailability of the source of food plays a role in depletion of concentration of substrate then $k(v) \propto v(x,t)$. Therefore the chemotactic model for unlimited substrate is reduced to
\begin{subequations}\label{mod3}
\begin{align}
     L_{\alpha,3}u & = \tau \frac{\partial u}{\partial t} +\beta \frac{\partial \ln v}{\partial x}\frac{\partial u}{\partial x} - \frac{\mu}{2}\frac{\partial^2 u}{\partial x^2} +\gamma \tau u\frac{\partial^2 \ln v}{\partial x^2}=0\label{ks1_b_sys}\\
     L_{\alpha,4}v & = \frac{\partial v}{\partial t} + k u v\label{ks1_s_sys}=0
\end{align}
\end{subequations}

\begin{theorem}
If the solution is in the form of Eq.~\eqref{chng var} then the system \eqref{mod3} exhibit traveling band phenomena. 
\end{theorem}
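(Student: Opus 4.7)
The plan is to parallel the proof of Theorem~\ref{unlimited-food}. Substituting the traveling-wave ansatz \eqref{chng var} into the system \eqref{mod3} and writing $(\cdot)' = d/d\zeta$, the equations become
\begin{align*}
-\tau c\, u' + \beta\,(\ln v)'\, u' - \tfrac{\mu}{2}\, u'' + \gamma\tau\, u\,(\ln v)'' &= 0,\\
-c\, v' + k\, u\, v &= 0,
\end{align*}
which I complement with the far-field conditions \eqref{bd_cond} at $\zeta \to +\infty$, namely $u, u' \to 0$ and $v \to v_{\infty}$. The task is to reduce this coupled system to a scalar ODE, integrate it, and verify the traveling-wave profile.

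The essential new feature compared with the unlimited-food setting is that the second equation now decouples $u$ algebraically: since $v>0$,
\begin{equation*}
u = \frac{c}{k}\,(\ln v)'.
\end{equation*}
Setting $p := (\ln v)'$ and substituting $u = (c/k)\,p$, $u' = (c/k)\,p'$, $u'' = (c/k)\,p''$, and $u\,(\ln v)'' = (c/k)\,p\,p'$ into the first equation collapses everything (after dividing by $c/k$) to the scalar second-order ODE
\begin{equation*}
-\tfrac{\mu}{2}\,p'' - \tau c\,p' + (\beta + \gamma\tau)\,p\,p' = 0.
\end{equation*}
Each summand is a total $\zeta$-derivative, so one integration gives
\begin{equation*}
\tfrac{\mu}{2}\,p' = -\tau c\,p + \tfrac{\beta+\gamma\tau}{2}\,p^{2} + C_{0},
\end{equation*}
and the asymptotics $p, p' \to 0$ at $+\infty$ force $C_{0} = 0$. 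The resulting first-order equation is separable and integrates explicitly by partial fractions, yielding $p(\zeta)$ in closed form; then $v = \exp\!\int p\, d\zeta$ (the additive constant fixed by $v(+\infty) = v_{\infty}$) and $u = (c/k)\,p$ complete the construction.

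The main obstacle lies not in the quadratures but in the qualitative verification of the profile. The reduced ODE has two equilibria, $p = 0$ and $p_{\ast} := 2\tau c/(\beta + \gamma\tau)$; a linear stability check shows that, viewed as a flow in $\zeta$, $p = 0$ is a sink and $p_{\ast}$ is a source. The admissible trajectory is therefore the heteroclinic orbit lying in $(0, p_{\ast})$, with $p(+\infty) = 0$ and $p(-\infty) = p_{\ast}$; translated through $u = (c/k)p$ and $v = \exp\!\int p\,d\zeta$, this gives a monotone-front solution with $u \to 0$, $v \to v_{\infty}$ at $+\infty$ and $v \to 0$ at $-\infty$. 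I will need to confirm that the explicit formula for $p$ obtained from the partial-fraction integration is defined on all of $\mathbb{R}$, which pins down the admissible sign of the remaining integration constant, and I expect an analogue of the positivity condition $2\beta/\mu > 1$ from Theorem~\ref{unlimited-food} to appear as a consistency requirement linking the wave speed $c$ to the parameters $\mu,\beta,\gamma,\tau$.
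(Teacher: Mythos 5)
Your proposal is correct and follows essentially the same route as the paper's proof: both use the substrate equation to write $(\ln v)'=\tfrac{k}{c}u$ (your $p$ is just $\tfrac{k}{c}u$), collapse the organism equation to the same quadratic first-order ODE after one integration with the constant killed by the far-field conditions, and integrate by partial fractions to get the front connecting $u=0$ to $u=\tfrac{2\tau c^{2}}{k(\beta+\gamma\tau)}$; your phase-plane discussion of the two equilibria is a harmless supplement the paper omits. One small correction to your closing expectation: no analogue of the condition $2\beta/\mu>1$ from Theorem~\ref{unlimited-food} arises here, because the exponent $-\mu/(\beta+\gamma\tau)$ in the resulting expression for $v$ is automatically negative, so $v\to 0$ as $\zeta\to-\infty$ without any extra constraint on the parameters.
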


\begin{proof}
With Eq.~\eqref{chng var}, the system of equations \eqref{ks1_b_sys} and \eqref{ks1_s_sys} reduce to,

\begin{align}
  L_{\alpha,3}u &= \tau c u^{'} - \beta (\ln v)^{'} u^{'} +\frac{\mu}{2} u^{''} - \tau \gamma u (\ln v)^{''}=0 \label{ks_red_b_sys} \\
  L_{\alpha,4}v & = c v^{'} -k u v=0\label{ks_red_s_sys}  
\end{align}

Note that, Eq.~\eqref{ks_red_s_sys} gives, 
\begin{align*}
    (\ln v)^{''}=\frac{k}{c}u^{'}.
\end{align*} 

Therefore from Eqs.~\eqref{ks_red_b_sys} and \eqref{ks_red_s_sys}, it follows that
\begin{equation}\label{main-ODE-2-model}
    u^{'} - C_{3} (u^2)^{'}+\frac{\mu}{2 \tau c} u^{''}=0
\end{equation}

Here
\begin{equation*}\label{alpha}
    C_3 =\frac{1}{2}\frac{k}{\tau c^2}(\beta+ \gamma \tau).
\end{equation*}
with dimension $[L]$.

Then integration of Eq.~\eqref{main-ODE-2-model} gives,
\begin{equation*}\label{1-st-order-ode-2}
    u - C_{3} u^2+\frac{\mu}{2 \tau c} u^{'}= const.
\end{equation*}

By the condition \eqref{bd_cond}, $const.=0.$  

Then from above, it follows,
\begin{align*}\label{1-st-order-ode-2}
 %   u^{'} &= \frac{2 \tau c}{\mu}(C_3 u^2 -u)\\
   \frac{u^{'}}{u(C_{4}-u)}&=-\frac{2 \tau c C_{3}}{\mu}
\end{align*}
with
\begin{equation*}\label{K-def}
C_{4}=\frac{1}{C_{3}}
\end{equation*}
which is dimensionless.

Partial decomposition gives, 
\begin{align*}\label{1-st-order-ode-2}
    \frac{u^{'}}{C_4 -u}+\frac{u^{'}}{u} & = -\frac{2 \tau c }{\mu} 
\end{align*}

And integration gives,
\begin{equation}\label{lim_neq_u}
    u= \frac{2 \tau c^2}{k(\beta + \gamma \tau)} \big(1+C_{5}e^{\frac{2 \tau c}{\mu}\zeta}\big)^{-1}.
\end{equation}
$C_{5}$ is the integrating constant.

Substituting Eq.~\eqref{lim_neq_u} into Eq.~\eqref{ks_red_s_sys} and integrating, we get
\begin{align}\label{lim_neq_v}
 v & = C_6 \big(e^{-\frac{2 \tau c}{\mu}\zeta}+C_5)^{-\frac{\mu} {\beta+ \gamma \tau}}
\end{align}
where $C_6$ is the integrating constant.

With condition \eqref{bd_cond}, it follows
\begin{align}
 v = v_{\infty} \left(1+C_{7} e^{-\frac{2 \tau c}{\mu}\zeta} \right)^{-\frac{\mu} {\beta+ \gamma \tau}}\label{lim_neq_v_cond}
\end{align}
with $C_{5}^{-1}=C_{7}$.

Then Eq.~\eqref{lim_neq_u} and \eqref{lim_neq_v_cond} satisfies
\begin{align*}
    \lim_{\zeta \xrightarrow{} \infty}u = 0 \text{     and     } &\lim_{\zeta \xrightarrow{} -\infty}u = \frac{2 \tau c^2}{k (\beta + \gamma \tau)} = \frac{2 \tau c^2 k^{-1} \beta^{-1}}{ 1 + \gamma_0 \tau},\\
    \lim_{\zeta \xrightarrow{} \infty}v = v_{\infty} \text{     and     } &\lim_{\zeta \xrightarrow{} -\infty}v = 0
\end{align*}

\end{proof}

\begin{remark}
Variable $\zeta =x-ct$ is of the form of traveling waves, and it converges to $-\infty$ if $x\to -\infty$ for fixed time $t$ or as $t\to \infty$ for fixed $x.$ In another more lenient word, ``when  space meet the time". Here, the concentration of organism $u(x, t)$ tends to converge to the constant that is inversely proportional to the quorum rate $\gamma_0$ as $\zeta \to -\infty$. 
\end{remark}

The next theorem explains the traveling band features for the model without crowd effect in the environment containing limited amount of substrates. 
\begin{theorem}
If $\gamma_0 = \frac{1}{\tau}$, our model becomes
\begin{subequations}\label{mod4}
 \begin{align} 
     L_3 u &=\tau \frac{\partial u}{\partial t} +\beta \frac{\partial}{\partial x}\left(u \frac{\partial \ln v}{\partial x}\right)- \frac{\mu}{2}\frac{\partial^2 u}{\partial x^2} = 0\label{ks1_ b_sys_eq}\\
     L_4 v &=\frac{\partial v}{\partial t} +k u v\label{ks1_ s_sys_eq}= 0
 \end{align}
\end{subequations}
then the above system exhibits traveling band phenomena.
\end{theorem}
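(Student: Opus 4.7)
The plan is to mirror the strategy of the preceding theorem (limited substrate with crowd effect), specialized to the case $\gamma\tau=\beta$, since $\gamma_0=1/\tau$ makes $\alpha=\gamma\tau-\beta=0$ and turns the chemotactic term into conservative (divergence) form. First I would substitute the traveling-wave ansatz $u(x,t)=u(\zeta)$, $v(x,t)=v(\zeta)$ with $\zeta=x-ct$ into \eqref{ks1_ b_sys_eq}--\eqref{ks1_ s_sys_eq}, producing the two ODEs
\begin{align*}
-\tau c\, u' + \beta \bigl(u (\ln v)'\bigr)' - \tfrac{\mu}{2} u'' &= 0,\\
-c\, v' + k\, u v &= 0.
\end{align*}

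Next I would use the second ODE to eliminate $\ln v$ derivatives: it gives $(\ln v)'=ku/c$, and differentiating once more yields $(\ln v)''=(k/c)u'$. Substituting into the conservative form $\beta(u(\ln v)')' = \beta\bigl(u'(\ln v)' + u(\ln v)''\bigr)$ collapses both terms into the same quantity $(\beta k/c)\, u u' = \tfrac{\beta k}{2c}(u^2)'$. This converts the first ODE into a single equation
\begin{equation*}
-\tau c\, u' + \tfrac{\beta k}{2c}(u^2)' - \tfrac{\mu}{2} u'' = 0,
\end{equation*}
depending only on $u$. I would then integrate once in $\zeta$ and fix the constant of integration to be zero using the far-field conditions \eqref{bd_cond}, obtaining a first-order autonomous ODE of Bernoulli type that factors as $u'=\frac{2u}{\mu}\bigl(\frac{\beta k}{c}u-\tau c\bigr)$.

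Separating variables and using partial fractions $\frac{1}{u(u-K)} = \frac{1}{K}\bigl(\frac{1}{u-K}-\frac{1}{u}\bigr)$ with $K=\tau c^2/(\beta k)$, an elementary integration gives
\begin{equation*}
u(\zeta) = \frac{\tau c^2}{\beta k}\,\bigl(1+C_5 e^{2\tau c\zeta/\mu}\bigr)^{-1},
\end{equation*}
for a positive integration constant $C_5$. Substituting this back into $(\ln v)'=ku/c$ and integrating (another elementary $\int dw/[w(w-1)]$ after the substitution $w=1+C_5 e^{2\tau c\zeta/\mu}$) yields
\begin{equation*}
v(\zeta) = v_\infty\bigl(1+C_7 e^{-2\tau c\zeta/\mu}\bigr)^{-\mu/(2\beta)},\qquad C_7=C_5^{-1},
\end{equation*}
where the constant is chosen to satisfy $v\to v_\infty$ at $+\infty$. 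Finally I would check the four limits: $u\to 0$, $v\to v_\infty$ as $\zeta\to+\infty$, and $u\to \tau c^2/(\beta k)$, $v\to 0$ as $\zeta\to-\infty$. These show the profile has the desired traveling-band shape and complete the proof. I do not expect a genuine obstacle here; this case is structurally a specialization of the previous theorem (the formulas coincide under $\gamma\tau=\beta$), and the only care needed is in the sign of $C_5$ (so that $u$ is positive and bounded) and in the exponent $\mu/(2\beta)$ inherited from the conservative form of the chemotactic flux.
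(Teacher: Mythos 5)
Your proof follows essentially the same route as the paper's: substitute $\zeta=x-ct$, use the substrate equation to get $(\ln v)'=\frac{k}{c}u$, reduce the organism equation to a logistic-type first-order ODE, integrate by partial fractions, and back-substitute for $v$; your closed forms $u=\frac{\tau c^2}{\beta k}\bigl(1+C_5e^{2\tau c\zeta/\mu}\bigr)^{-1}$ and $v=v_\infty\bigl(1+C_7e^{-2\tau c\zeta/\mu}\bigr)^{-\mu/(2\beta)}$ and the four limits agree with the paper's, which merely fixes the integration constant as $C_8=\frac{\mu k}{2\tau c^2}$ so as to express everything through $d=\frac{2\beta}{\mu}$. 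The only (harmless) slip is the intermediate coefficient: $\beta\bigl(u(\ln v)'\bigr)'=\frac{\beta k}{c}(u^2)'$, not $\frac{\beta k}{2c}(u^2)'$, and your subsequent factored ODE and final formulas already use the correct value.
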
 

\begin{proof}
With Eq.~\eqref{chng var}, the system of equation reduces to,
\begin{align}
  L_{3} & = \tau c u^{'} - \beta (u (\ln v)^{'})^{'}  + \frac{\mu}{2} u^{''} = 0\label{ks1_red_b_sys} \\
  L_{4} & = c v^{'}  -  k u v = 0\label{ks1_red_s_sys}  
\end{align}

From Eq.~\eqref{ks1_red_s_sys}, we find
\begin{align}\label{s_ln_sys}
    (\ln v)^{'} = \frac{k}{c}u
\end{align}

Then Eq.~\eqref{ks1_red_b_sys} becomes,
\begin{align}
%    \tau c u^{'} - \frac{\beta k}{c} (u^2)^{'}+\frac{\mu}{2} u^{''} &= 0\\
    u^{'} - \frac{\beta k}{\tau c^2} (u^2)^{'}+\frac{\mu}{2 \tau c} u^{''} &= 0 \label{ks1_int1_b_sys}
\end{align}

Integrating Eq.~\eqref{ks1_int1_b_sys} and applying \eqref{bd_cond}, we get
\begin{align*}
    \frac{u^{'}}{u \big(1-\frac{\beta k}{\tau c^2}u\big)} &= -\frac{2 \tau c}{\mu}\label{ks1_pf_b_sys}
\end{align*}
With partial fraction decomposition and integrating, we get

\begin{equation}\label{ks1_b_soln}
    u =\frac{1}{\frac{\beta k}{\tau c^2}+C_8 e^{\frac{2 \tau c}{\mu}\zeta}}
\end{equation}
where $C_8$ is the integrating constant.

Upon setting $C_8 = \frac{\mu k}{2 \tau c^2}$, Eq.~\eqref{ks1_b_soln} becomes,
\begin{align}
    u &= 2 \tau c^2k^{-1} \mu^{-1}\bigg(d + e^{\frac{2 \tau c}{\mu}\zeta}\bigg)^{-1}\label{lim_eq_u}
\end{align}

Substituting Eq.~\eqref{lim_eq_u} into Eq.~\eqref{s_ln_sys} and integrating, we get
\begin{align*}
    v &= C_9 \bigg(\frac{\beta k}{\tau c^2}e^{-\frac{2 \tau c}{\mu}\zeta}+\frac{\mu k}{2 \tau c^2} \bigg)^{-\frac{\mu}{2 \beta}}
\end{align*}
with integrating constant $C_9$.

Applying ~\eqref{bd_cond}, we get
\begin{align}
    v &= v_{\infty} \bigg(d e^{-\frac{2 \tau c}{\mu}\zeta} +1\bigg)^{-\frac{1}{d}}\label{lim_eq_v}
\end{align}

The solutions \eqref{lim_eq_u} and \eqref{lim_eq_v} have the properties
\begin{align*}\label{prop_u_lim_eq}
   \lim_{\zeta \xrightarrow{} \infty}u = 0 \text{       and     } &\lim_{\zeta \xrightarrow{} -\infty}u = \frac{2 \tau c^2k^{-1} \mu^{-1}}{d} = \frac{\tau c^2}{k\beta},\\
    \lim_{\zeta \xrightarrow{} \infty}v = v_{\infty} \text{       and     } &\lim_{\zeta \xrightarrow{} -\infty}v = 0.
\end{align*}

\end{proof}

\begin{remark}
The concentration of organism converges to a constant which is inversely proportional to $d = \frac{2 \beta}{\mu}$ for long time. Recall that, $\beta$ is the rate of the relative flux of organism with respect to chemotactic response and $\mu$ is the motility coefficient. Here, chemotactic response is measured by the ratio of the gradient of chemotaxis with respect to its density. It is important to observe that in this case, unlike the previous one, both organisms and substrates are independent of the tail of crowd formed by the organism that chase after food, following leaders. More simplification of this expression tells us that the constant is proportional to time interval of collision and traveling speed but inversely proportional to the consumption rate and chemotactic coefficient.  
\end{remark}

%%%%%%%%%%%%%%%%%%%%%%%%%%%%%%%%%%%%%%%%%%%%%%%%%%%%%%%%%%%%%%%%%%%%%%%%%

\section{Discussion:}
\label{dis}
In this section, we provide analyses of obtained closed form solutions of all of the four models (\eqref{mod1}, \eqref{mod2}, \eqref{mod3} and \eqref{mod4}). We will discuss the traveling band phenomena in each cases. Since analytical solutions are not obtainable for the model \eqref{mod2} involving crowd effect in the presence of unlimited substrate, we will show the upper and lower estimates of the analytical solution featuring traveling band. 
All results are qualitative and we used the following listed parameter values that are adapted from published data \cite{Adleramino} and \cite{AdlerDahl} except $\tau$ and $\gamma_0$ for comparison. Our own data will be provided from future experiments based on fluorescence imaging. Review of corresponding technique is presented in article  \cite{akifpap1}. 

\begin{table}[H]
    \centering
    \begin{tabular}{c c c c}
    \hline
    Parameter & Description & Value & Units \\
    \hline
    $\tau$ & time interval of collision & 0.05-0.005 & hour\\
    $\mu$ & motility coefficient & 0.25 & cm$^2$/hour\\
    $c$ & band speed & 1.5 & cm/hour\\
    $\beta$ & chemotactic coefficient & 0.16-0.6 & cm$^2$/hour\\%0.1625, 0.25, 0.375, 0.625
    $d$ & $\frac{2 \beta}{\mu}$ & 0.3 - 5 & unit less\\
    $\gamma_0$ & quorum rate & 12-100 & 1/hour\\
    $C_5$ & Integrating constant & 1 & unit less\\
    \hline
    \end{tabular}
    \caption{Parameter values}
    \label{pv}
\end{table}

Graphs of the solutions of the concentration of organism for the system \eqref{mod1} for different values of $\tau$ are given in the Fig.~\ref{fig:unlim_eq_tau}. The size of the band is wider when $\tau$ gets smaller depicting the dependence of the size of the traveling band on the time collision $\tau$.   
\begin{figure}
    \centering
    \includegraphics[width=0.6\textwidth]{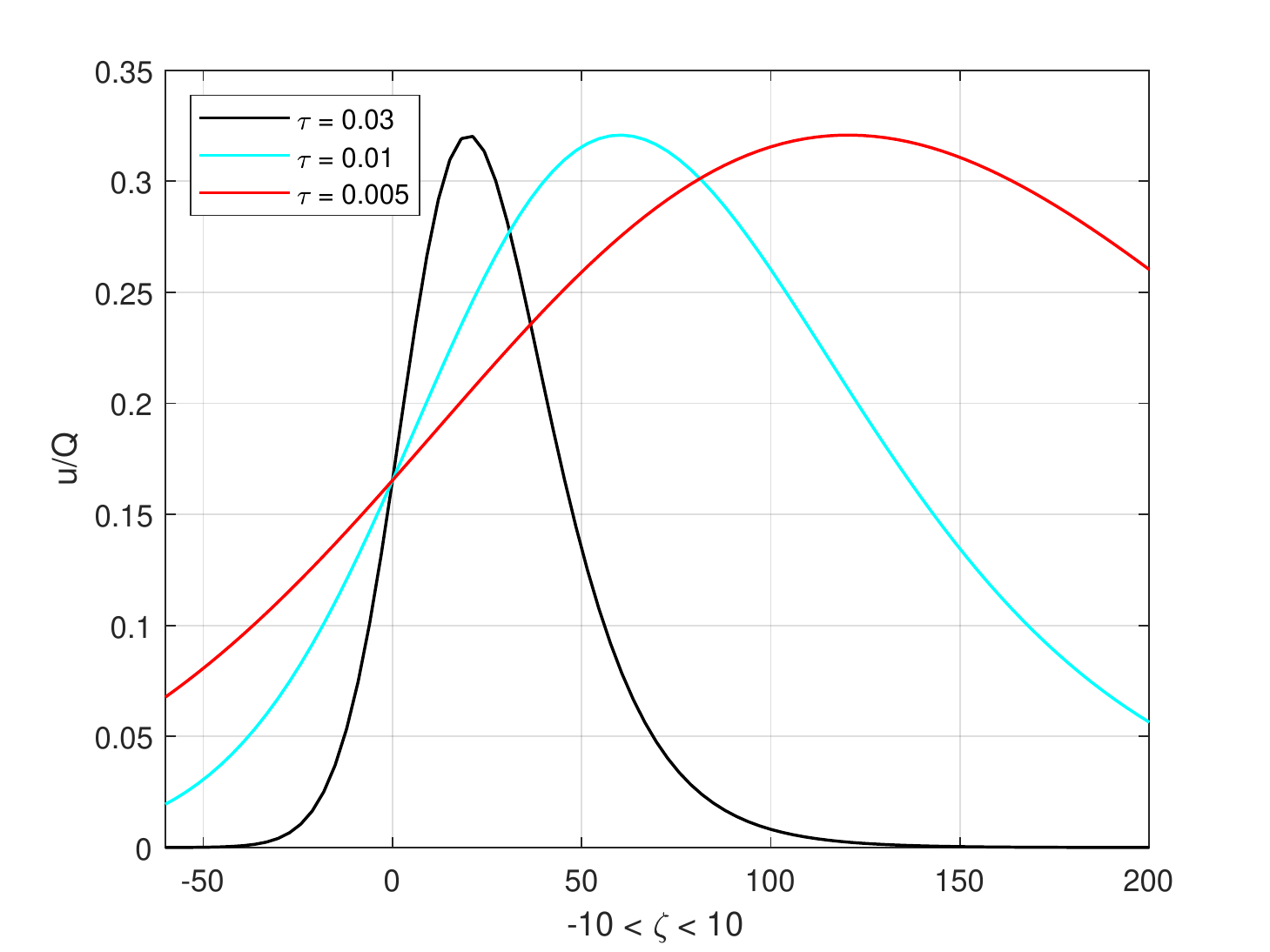}
    \caption{Concentration of organism $u(x,t)$ divided by $Q =  2 \tau c^{2} k^{-1}\mu^{-1} v_{\infty}$ of model \eqref{mod1} for different values of $\tau$ with $d = 1.3$ against $\zeta = c \mu^{-1} (x- c t)$.}
    \label{fig:unlim_eq_tau}
\end{figure}

Fig.~\ref{fig:ul_neq_bact_x_t} gives the upper estimate (magenta curve) and lower estimate (green curve) of the analytical solution of organism for the system \eqref{mod2} for different values of $d$. Graphs of first row is the estimate for a fixed $x$ value and of second row is for a fixed $t$ value. $\lambda$ has computed using the Eqs.~\eqref{lambda-} and \eqref{lambda+} for $\gamma_0 = 25$ and $\tau = 0.05$. Notice that, $\lambda$ is proportional to the quorum factor $\gamma_0$. And therefore $u_{+}$ and $u_{-}$ which are scaled by the factor of $exp (\lambda t)$ give better estimation when quorum rate is necessarily small for fixed $x$. 
\begin{figure}
    \centering
    \includegraphics[width=1\textwidth]{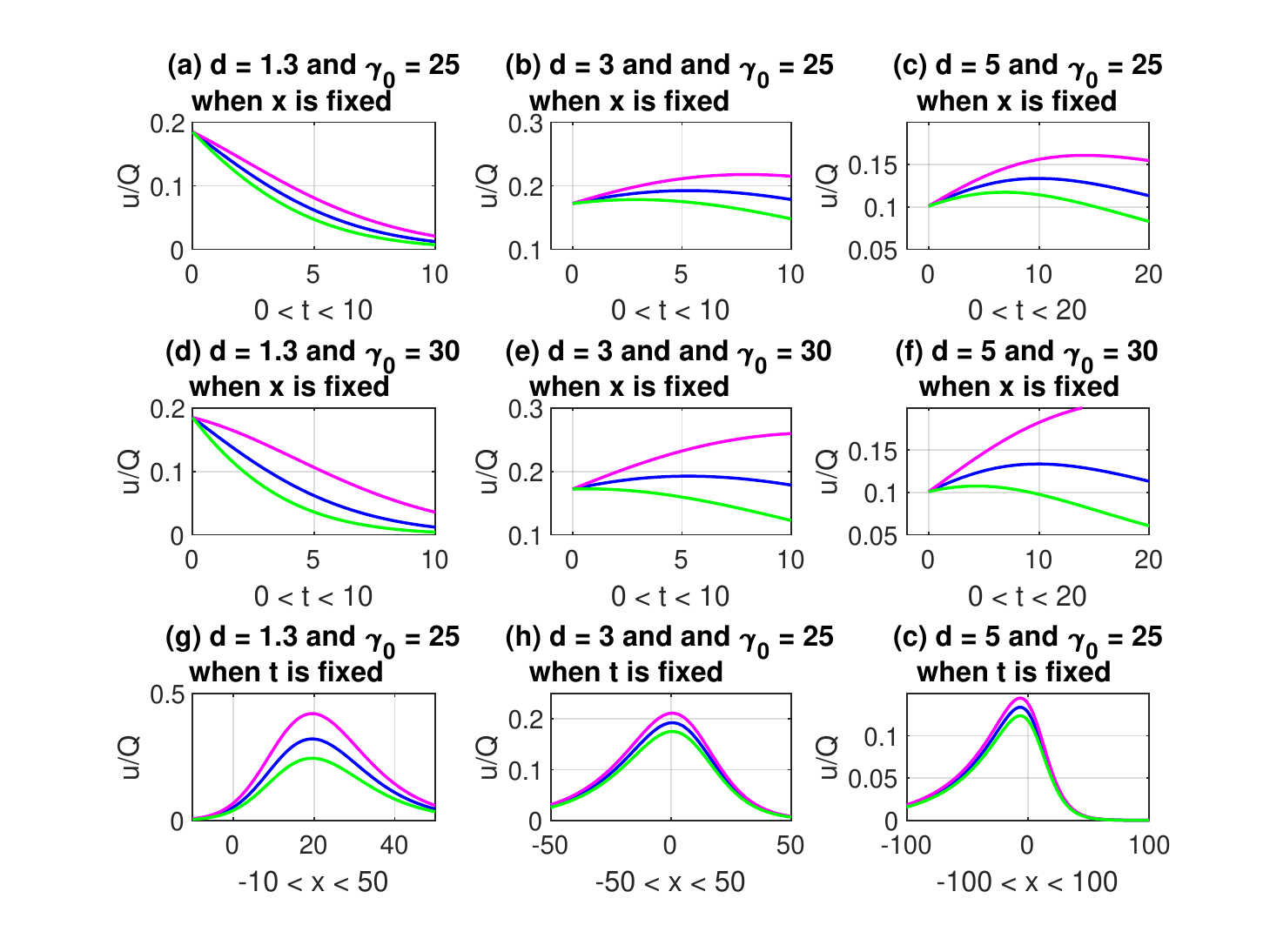}
    \caption{Concentration of organism $u(x,t)$ divided by $Q =  2 \tau c^{2} k^{-1}\mu^{-1} v_{\infty}$ of model \eqref{mod2} and its lower estimate (green curve) and upper estimate (magenta curve) for different values of $d$ and $\gamma_0$ when $t$ is fixed (first and second row) and $x$ is fixed (third row) in $\zeta = c \mu^{-1} (x- c t)$.}
    \label{fig:ul_neq_bact_x_t}
\end{figure}

The concentration of organism and of substrate of the model \eqref{mod3} in Figs.~\ref{fig:lim_neq} and \ref{fig:lim_neq_sub} for different values of $\gamma_0$. Concentration of $u$ converge to 0 as $\zeta \to \infty$. For large negative values of $\zeta$, the concentration $u$ converges to a constant that get reduced in size as $\gamma_0$ get larger.Therefore, in the presence of limited food, if we fix the location and look ahead for long time the concentration of organism will converge to a smaller constant if the quorum rate is bigger. Recall, quorum rate is the rate of the number of organism or cell in the crowd that inspires other to follow. And, in the case of substrate, the curve of $v$ gets flatter as $\gamma_0$ increases. 
\begin{figure}
    \centering
    \includegraphics[width=0.6\textwidth]{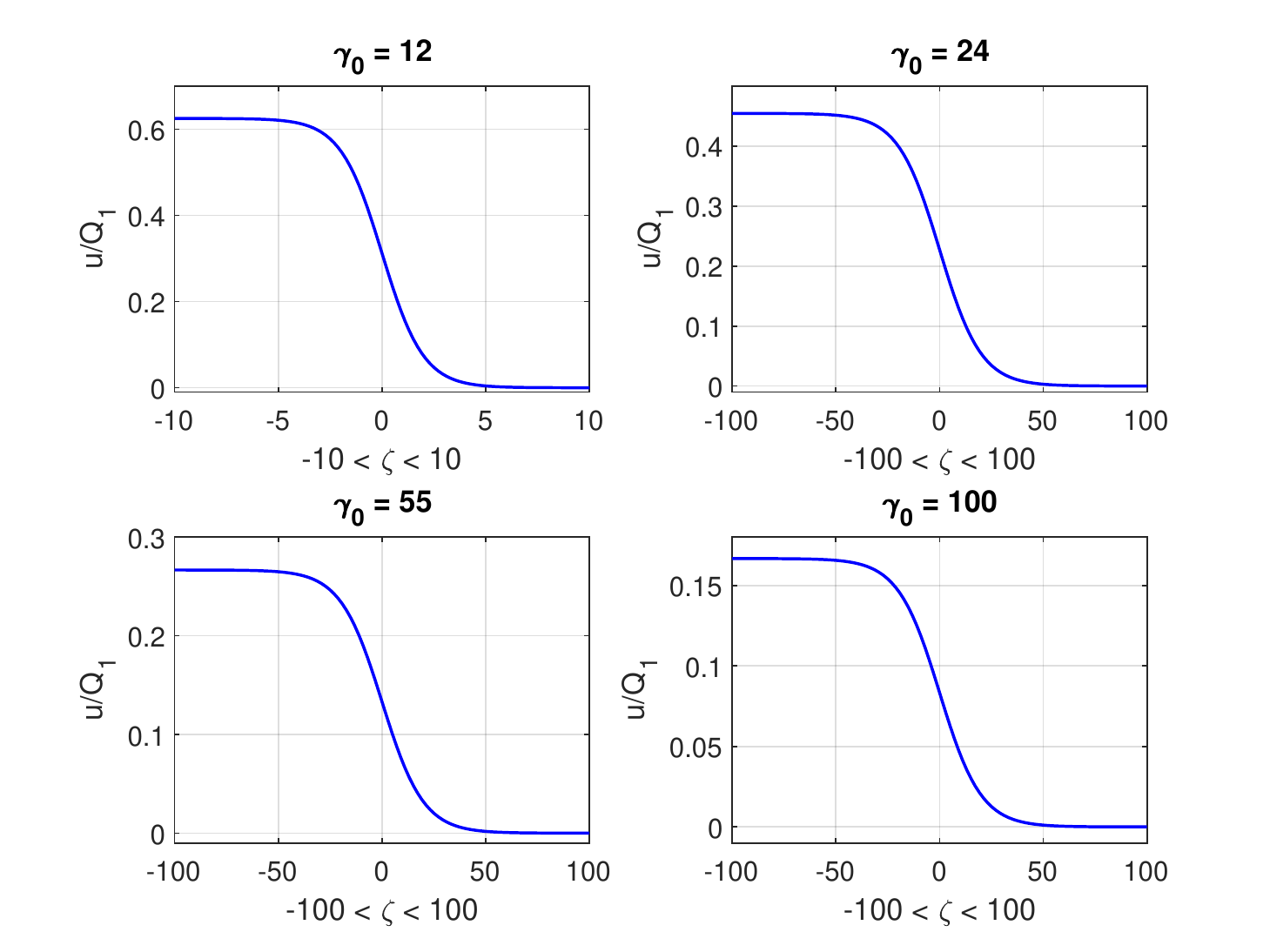}
    \caption{Concentration of organism $u(x,t)$ divided by $Q_{1} =  2 \tau c^{2} k^{-1}\beta^{-1}$ of model \eqref{mod3} for different values of $\gamma_0$ against $\zeta = c \mu^{-1} (x- c t)$.}
    \label{fig:lim_neq}
\end{figure}

\begin{figure}
    \centering
    \includegraphics[width=0.6\textwidth]{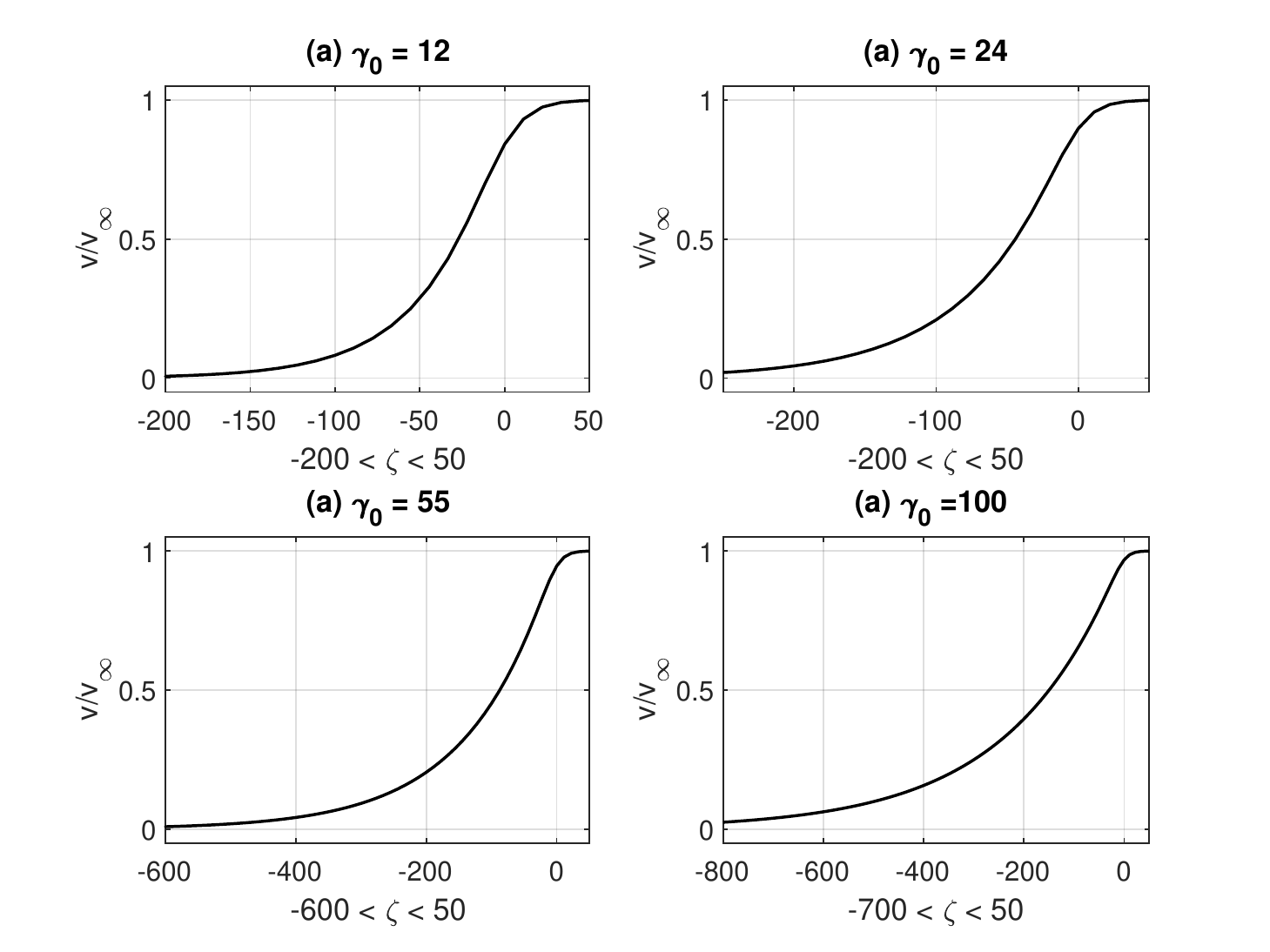}
    \caption{Same as in Fig. \ref{fig:lim_neq} for the concentration of substrate $v(x,t)$ divided by $v_{\infty}$ of model \eqref{mod3}.}
    \label{fig:lim_neq_sub}
\end{figure}

The graphs of Fig.~\ref{fig:lim_eq_bact} shows that concentration $u$ of model \eqref{mod4} converges to 0 as $\zeta \to -\infty$ and to a constant as $\zeta \to \infty$. This constant is inversely proportional to the value of $d$ in the absence of $\gamma_0$. Graphs of substrate in the Fig.~\ref{fig:lim_eq_subs} represent the substrate of model \eqref{mod4} for different $d$.  
\begin{figure}
    \centering
    \includegraphics[width=0.6\textwidth]{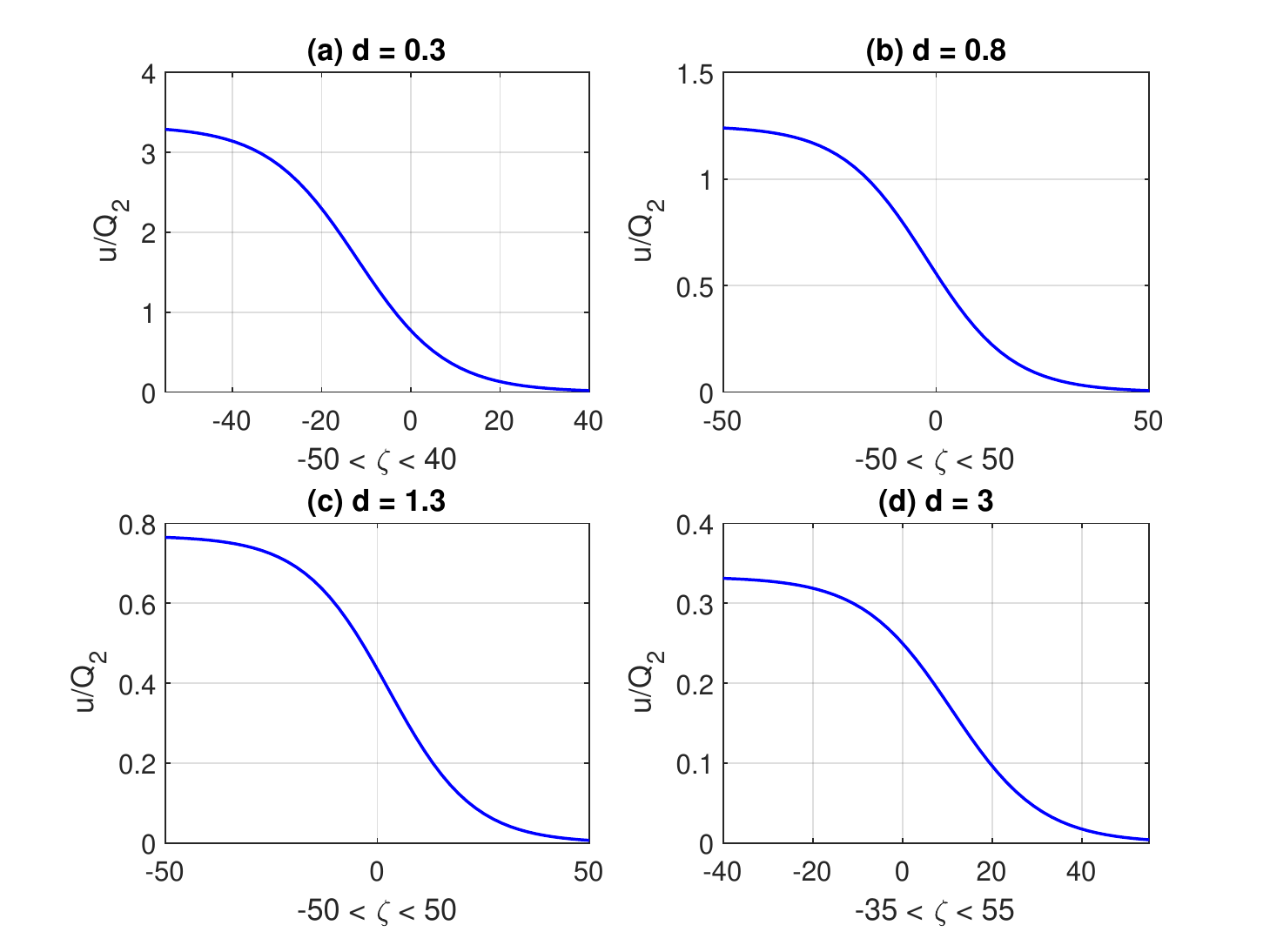}
    \caption{Concentration of organism $u(x,t)$ divided by $Q_{2} =  2 \tau c^{2} k^{-1}\mu^{-1}$ of model \eqref{mod4} for different values of $d$ against $\zeta = c \mu^{-1} (x- c t)$.}
    \label{fig:lim_eq_bact}
\end{figure}

\begin{figure}
    \centering
    \includegraphics[width=0.6\textwidth]{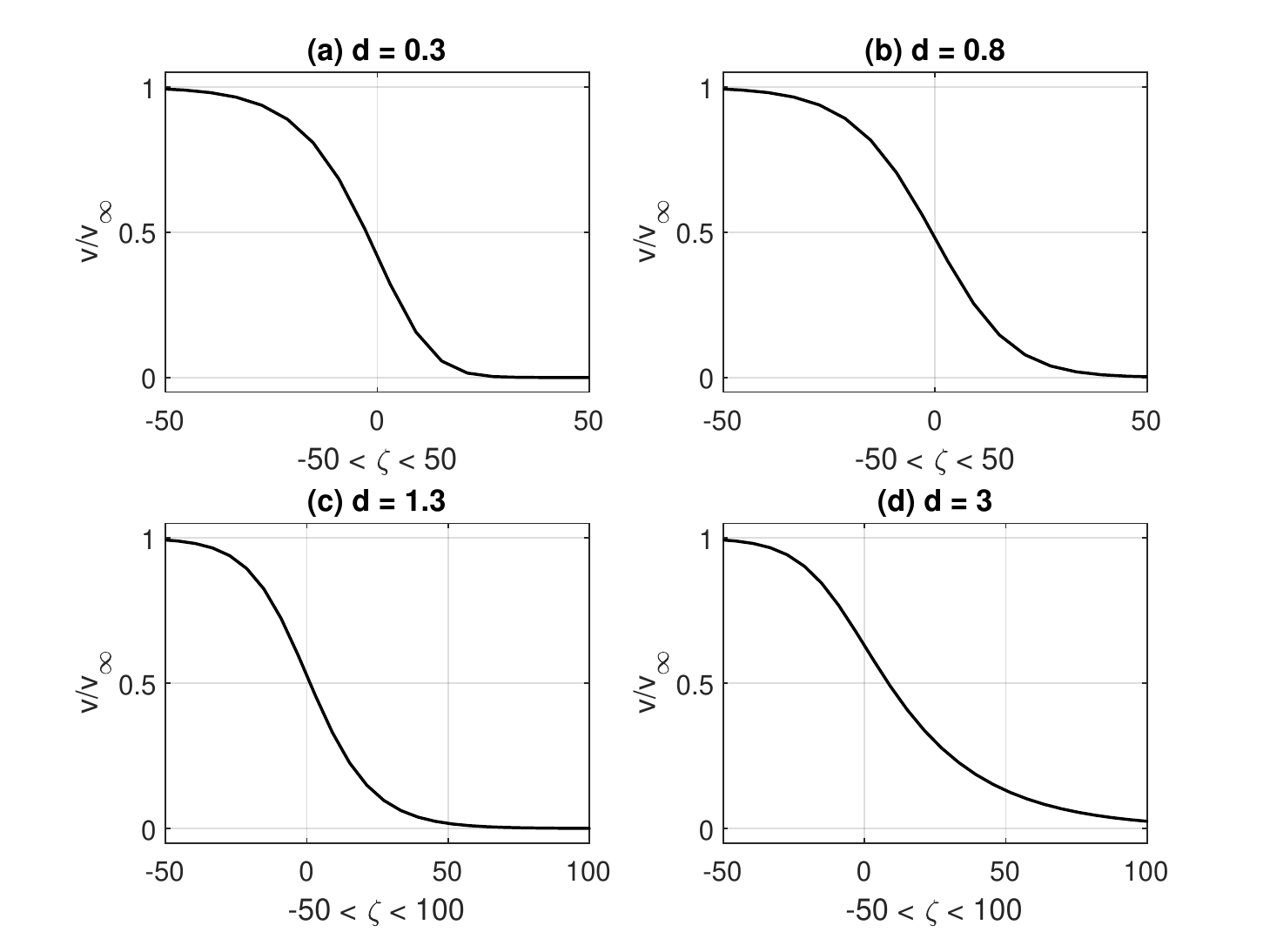}
    \caption{Same as in Fig. \ref{fig:lim_eq_bact} for the concentration of substrate $v(x,t)$ divided by $v_{\infty}$ of model \eqref{mod4}.}
    \label{fig:lim_eq_subs}
\end{figure}

%%%%%%%%%%%%%%%%%%%%%%%%%%%%%%%%%%%%%%%%%%%%%%%%%%%%%%%%%%%%%%%%%%%%%%%%%

\section{Conclusion:} In this study, we have shown a way of incorporation of Einstein's Brownian motion model in deducing chemotactic system. This involvement describes the dependence of the band size of the solution \eqref{cor_b_sys_mod1} for the model \eqref{mod1} on the time collision $\tau$. For unlimited food and without considering the crowd phenomena, we obtain analytical solution but its not the case for the model \eqref{mod2} that involves crowd effect. However, we were able to develop a mapping that enable us to obtain estimates for the traveling band of organism in the presence of unlimited food. These estimates were proved to be bound on the analytical solution with band form. \par

We have also described models when there is a limited presence of food in the environment. The model \eqref{mod3} with crowd effect shows that organism travel in a band form towards the presence of substrates. And the size of the organism for long time depends on the quorum rate $\gamma_0$. Therefore, if the rate of concentration of organism in a crowd that triggers the event of following crowd is larger then the size of the organism get smaller in the progression of time. The model \eqref{mod4} without crowd effect for limited food has also been explained to focus on the system where colonial formation doesn't occur. In this case, when $\gamma_0$ is no longer effective, organism still moves in traveling band form. But the size of the organism for long time depends inversely on the value $d$ which describes the ratio of chemotactic response and motility.

%%%%%%%%%%%%%%%%%%%%%%%%%%%%%%%%%%%%%%%%%%%%%%%%%%%%%%%%%%%%%%%%%%%%%%%%%%%%%%%%%%%%%%%%%%%%%%%%%%

\nocite{*}
\bibliography{chemotaxis}

\end{document}